\begin{document}
\baselineskip = 16pt

\newcommand \ZZ {{\mathbb Z}}
\newcommand \NN {{\mathbb N}}
\newcommand \RR {{\mathbb R}}
\newcommand \PR {{\mathbb P}}
\newcommand \AF {{\mathbb A}}
\newcommand \GG {{\mathbb G}}
\newcommand \QQ {{\mathbb Q}}
\newcommand \CC {{\mathbb C}}
\newcommand \bcA {{\mathscr A}}
\newcommand \bcC {{\mathscr C}}
\newcommand \bcD {{\mathscr D}}
\newcommand \bcF {{\mathscr F}}
\newcommand \bcG {{\mathscr G}}
\newcommand \bcH {{\mathscr H}}
\newcommand \bcM {{\mathscr M}}
\newcommand \bcI {{\mathscr I}}
\newcommand \bcJ {{\mathscr J}}
\newcommand \bcK {{\mathscr K}}
\newcommand \bcL {{\mathscr L}}
\newcommand \bcO {{\mathscr O}}
\newcommand \bcP {{\mathscr P}}
\newcommand \bcQ {{\mathscr Q}}
\newcommand \bcR {{\mathscr R}}
\newcommand \bcS {{\mathscr S}}
\newcommand \bcV {{\mathscr V}}
\newcommand \bcU {{\mathscr U}}
\newcommand \bcW {{\mathscr W}}
\newcommand \bcX {{\mathscr X}}
\newcommand \bcY {{\mathscr Y}}
\newcommand \bcZ {{\mathscr Z}}
\newcommand \goa {{\mathfrak a}}
\newcommand \gob {{\mathfrak b}}
\newcommand \goc {{\mathfrak c}}
\newcommand \gom {{\mathfrak m}}
\newcommand \gon {{\mathfrak n}}
\newcommand \gop {{\mathfrak p}}
\newcommand \goq {{\mathfrak q}}
\newcommand \goQ {{\mathfrak Q}}
\newcommand \goP {{\mathfrak P}}
\newcommand \goM {{\mathfrak M}}
\newcommand \goN {{\mathfrak N}}
\newcommand \uno {{\mathbbm 1}}
\newcommand \Le {{\mathbbm L}}
\newcommand \Spec {{\rm {Spec}}}
\newcommand \Gr {{\rm {Gr}}}
\newcommand \Pic {{\rm {Pic}}}
\newcommand \Jac {{{J}}}
\newcommand \Alb {{\rm {Alb}}}
\newcommand \Corr {{Corr}}
\newcommand \Chow {{\mathscr C}}
\newcommand \Sym {{\rm {Sym}}}
\newcommand \Prym {{\rm {Prym}}}
\newcommand \cha {{\rm {char}}}
\newcommand \eff {{\rm {eff}}}
\newcommand \tr {{\rm {tr}}}
\newcommand \Tr {{\rm {Tr}}}
\newcommand \pr {{\rm {pr}}}
\newcommand \ev {{\it {ev}}}
\newcommand \cl {{\rm {cl}}}
\newcommand \interior {{\rm {Int}}}
\newcommand \sep {{\rm {sep}}}
\newcommand \td {{\rm {tdeg}}}
\newcommand \alg {{\rm {alg}}}
\newcommand \im {{\rm im}}
\newcommand \gr {{\rm {gr}}}
\newcommand \op {{\rm op}}
\newcommand \Hom {{\rm Hom}}
\newcommand \Hilb {{\rm Hilb}}
\newcommand \Sch {{\mathscr S\! }{\it ch}}
\newcommand \cHilb {{\mathscr H\! }{\it ilb}}
\newcommand \cHom {{\mathscr H\! }{\it om}}
\newcommand \colim {{{\rm colim}\, }} 
\newcommand \End {{\rm {End}}}
\newcommand \coker {{\rm {coker}}}
\newcommand \id {{\rm {id}}}
\newcommand \van {{\rm {van}}}
\newcommand \spc {{\rm {sp}}}
\newcommand \Ob {{\rm Ob}}
\newcommand \Aut {{\rm Aut}}
\newcommand \cor {{\rm {cor}}}
\newcommand \Cor {{\it {Corr}}}
\newcommand \res {{\rm {res}}}
\newcommand \red {{\rm{red}}}
\newcommand \Gal {{\rm {Gal}}}
\newcommand \PGL {{\rm {PGL}}}
\newcommand \Bl {{\rm {Bl}}}
\newcommand \Sing {{\rm {Sing}}}
\newcommand \spn {{\rm {span}}}
\newcommand \Nm {{\rm {Nm}}}
\newcommand \inv {{\rm {inv}}}
\newcommand \codim {{\rm {codim}}}
\newcommand \Div{{\rm{Div}}}
\newcommand \CH{{\rm{CH}}}
\newcommand \sg {{\Sigma }}
\newcommand \DM {{\sf DM}}
\newcommand \Gm {{{\mathbb G}_{\rm m}}}
\newcommand \tame {\rm {tame }}
\newcommand \znak {{\natural }}
\newcommand \lra {\longrightarrow}
\newcommand \hra {\hookrightarrow}
\newcommand \rra {\rightrightarrows}
\newcommand \ord {{\rm {ord}}}
\newcommand \Rat {{\mathscr Rat}}
\newcommand \rd {{\rm {red}}}
\newcommand \bSpec {{\bf {Spec}}}
\newcommand \Proj {{\rm {Proj}}}
\newcommand \pdiv {{\rm {div}}}
\newcommand \wt {\widetilde }
\newcommand \ac {\acute }
\newcommand \ch {\check }
\newcommand \ol {\overline }
\newcommand \Th {\Theta}
\newcommand \cAb {{\mathscr A\! }{\it b}}

\newenvironment{pf}{\par\noindent{\em Proof}.}{\hfill\framebox(6,6)
\par\medskip}

\newtheorem{theorem}[subsection]{Theorem}
\newtheorem{conjecture}[subsection]{Conjecture}
\newtheorem{proposition}[subsection]{Proposition}
\newtheorem{lemma}[subsection]{Lemma}
\newtheorem{remark}[subsection]{Remark}
\newtheorem{remarks}[subsection]{Remarks}
\newtheorem{definition}[subsection]{Definition}
\newtheorem{corollary}[subsection]{Corollary}
\newtheorem{example}[subsection]{Example}
\newtheorem{examples}[subsection]{examples}

\title{Chow groups of  conic bundles in $\PR^5$ and the Generalised Bloch's conjecture}
\author{Kalyan Banerjee}

\address{Harish Chandra Research Institute, India}

\email{banerjeekalyan@hri.res.in}

\begin{abstract}
Consider the Fano surface of a conic bundle embedded in $\PR^5$. Let $i$ denote the natural involution acting on this surface. In this note we provide an obstruction to the identity action of the involution on the group of algebraically trivial zero cycles modulo rational equivalence on the surface.
\end{abstract}
\maketitle

\section{Introduction}

One of the very important problems in algebraic geometry is to understand the Chow group of zero cycles on a smooth projective surface with geometric genus and irregularity equal to $0$. It was already proved by Mumford \cite{M}, that for a smooth, projective complex surface of geometric genus greater than zero, the Chow group of zero cycles is infinite dimensional, in the sense that, it cannot be "parametrized" by an algebraic variety. The conjecture due to Spencer Bloch asserts the converse, that is, for a surface of geometric genus and irregularity zero, the Chow group of zero cycles is isomorphic to the group of integers. The Bloch's conjecture has been studied and proved in the case when the surface is not of general type by \cite{BKL} and for surfaces of general type by \cite{B}, \cite{IM}, \cite{GP}, \cite{PW}, \cite{V},\cite{VC}. Inspired by the Bloch's conjecture, the following conjecture is made, which is a generalisation \cite{Vo}[conjecture 11.19].

\medskip

\textit{ Conjecture : Let $S$ be a smooth projective surface over the field of complex numbers and let $\Gamma$ be a codimension two cycle on $S\times S$. Suppose that $\Gamma^*$ acts as zero on the space of globally holomorphic two forms on $S$, then $\Gamma_*$ acts as zero on the kernel of the albanese map from $\CH_0(S)$ to $Alb(S)$.}

\medskip

This conjecture was studied in detail when the correspondence $\Gamma$ is the $\Delta- Graph(i)$, where $i$ is a sympletic involution on a $K3$ surface by \cite{GT}, \cite{HK},\cite{Voi}. In the example of K3 surfaces the push-forward induced by the involution acts as identity on Chow group of zero cycles of degree zero.

Inspired by this conjecture we consider the following question in this article. Let $X$ be a smooth, cubic fourfold in $\PR^5$. Consider a line $l$ in $\PR^5$, embedded in $X$. Considering the projection from the line $l$ to $\PR^3$, we have a conic bundle structure on the cubic $X$. Let $S$ be the discriminant surface of this conic bundle. Let $T$ be the double cover of $S$ inside the Fano variety of lines $F(X)$ of $X$, arising from the conic bundle structure. Then $T$ has a natural involution and we observe that the group of algebraically trivial zero cycles on $T$ modulo rational equivalence (denoted by $A_0(T)$) maps surjectively onto the algebraically trivial one cycles on $X$ modulo rational equivalence (denoted by $A_1(X)$).  The  action of the involution has as its invariant part equal to the $A_0(S)$ and as anti-invariant part equal to $A_1(X)$. The involution cannot act as $+1$ on the group $A_1(X)$, as it will follow that all the elements of $A_1(X)$ are $2$-torsion, hence $A_1(X)$ is weakly representable. This is not true by the main theorem of \cite{SC}. Now the question is, what is the obstruction to the $+1$ action of the involution in terms of the geometry of $S,T$.

\medskip

\begin{theorem}
\label{theorem3}
Let $S$ be the discriminant surface as mentioned above. Then for any very ample line bundle $L$ on $S$ we cannot have the equality
$$L^2-g+1=g+n$$
where $g$ is the genus of the curve in the linear system of $L$ and $n$ is a positive integer.
\end{theorem}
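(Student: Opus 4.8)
The plan is to reduce the asserted non-equality to the adjunction formula together with the positivity of the canonical class of $S$; the only genuinely geometric input is that $S$ is a quintic surface in $\PR^3$, so that $K_S$ is effective.

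\emph{First} I would record the degree of $S$. Choose coordinates with $l=\{x_0=x_1=x_2=x_3=0\}\subset\PR^5$; the planes through $l$ are parametrised by $[a_0:\cdots:a_3]\in\PR^3$, and since $l\subset X=\{F=0\}$ the cubic $F$ has no monomial in $x_4,x_5$ alone. Restricting $F$ to the plane $\langle l,[a]\rangle$ and dividing out the equation of $l$ leaves, in fibre coordinates $(s,t_0,t_1)$, a conic whose symmetric matrix has entries that are linear in $a$ in the $(t_i,t_j)$ block, quadratic in $a$ in the $(s,t_i)$ slots, and cubic in $a$ in the $(s,s)$ slot; its determinant is then a form of degree $5$ in $a$. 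Hence $S\in|\mathcal{O}_{\PR^3}(5)|$; for general $l$ its singularities, if any, are rational double points, and on the minimal resolution $f\colon\widetilde S\to S$ one has $K_{\widetilde S}=f^*\mathcal{O}_S(1)$ by adjunction for a hypersurface together with crepancy of the resolution. In particular $K_{\widetilde S}$ (equivalently $K_S$) is effective, nonzero and nef, with $p_g(S)=4$; this is also precisely what places the double cover $T$ and the involution $i$ inside the scope of the generalised Bloch conjecture.

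\emph{Next} I would run the contradiction. Suppose $L^2-g+1=g+n$ for some very ample $L$ on $S$ and some integer $n\geq 1$, with $g$ the genus of $C\in|L|$. By Bertini a general $C\in|L|$ is smooth and irreducible, and avoids the finitely many singular points of $S$, so adjunction applies verbatim (on $\widetilde S$, or on $S$ itself, as $C$ lies in the smooth locus):
$$2g-2=C\cdot(C+K_S)=L^2+L\cdot\mathcal{O}_S(1).$$
Substituting $L^2=2g-1+n$ gives $L\cdot\mathcal{O}_S(1)=(2g-2)-(2g-1+n)=-(n+1)\leq-2$. But $L$ is ample and $\mathcal{O}_S(1)$ is represented by a nonzero effective divisor (a hyperplane section of $S$), so $L\cdot\mathcal{O}_S(1)\geq1$; this is a contradiction, and the theorem follows. (The same line in fact yields the sharper $L^2=2g-2-L\cdot\mathcal{O}_S(1)\leq 2g-3$.) Conceptually, $L^2-g+1=\chi(C,L|_C)$, while a quantity of the shape $g+n$ with $n>0$ is what one expects from the other side once one knows the anti-invariant summand $H^0(\widetilde S,K_{\widetilde S}\otimes\eta)$ of $H^{2,0}(T)$ is nonzero --- itself forced, through the generalised Bloch conjecture, by the impossibility of $i_*$ acting as $+1$ on $A_1(X)$; the theorem records that the two sides cannot coincide.

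The single non-formal ingredient is the first step: that $K_S$ is effective and nef. This genuinely uses the geometry --- were $S$ rational the displayed equality would have solutions --- so identifying the discriminant as a quintic surface and controlling its singularities is where the real work sits; everything after it is adjunction and the positivity of a single intersection number.
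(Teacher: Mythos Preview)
Your argument is correct. Once one knows that $S\subset\PR^3$ is a quintic, adjunction gives $2g-2=L^2+L\cdot K_S$ with $K_S=\mathcal{O}_S(1)$ ample, so $L\cdot K_S>0$ forces $L^2\leq 2g-3$; the hypothetical equality $L^2=2g-1+n$ with $n\geq 1$ is therefore impossible. Your handling of possible singularities (rational double points, crepant resolution, general $C\in|L|$ missing them) is adequate.

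This is, however, a completely different route from the paper's. The paper does not invoke adjunction at all. Instead it argues by contradiction through the cycle-theoretic machinery built earlier: assuming $L^2-g+1=g+n$, it interprets $g+n$ as $\dim|L|$, shows that the map $T^{g+n}\to A_1(X)$ sending $(t_1,\dots,t_{g+n})\mapsto\sum Z_*(t_i)-i_*Z_*(t_i)$ factors through the relative Prym fibration $\mathcal{P}(\widetilde{\mathcal{C}}/\mathcal{C})$ over $|L|_0$, and then runs a dimension count and an iterated ample-divisor argument to conclude that the image of $\Gamma_*=(\id-i_*)\circ Z_*$ is finite dimensional in Roitman's sense. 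Theorem~\ref{theorem1} then forces $i_*=\id$ on $A_1(X)$, contradicting Schoen's non-representability result for $A_1(X)$.

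Your approach is far more elementary and shows that the numerical statement, taken in isolation, is really just a fact about smooth curves on surfaces of general type; it would hold for any surface with $K$ nef and nonzero. What the paper's argument buys is the conceptual link the introduction promises: it exhibits the equality $L^2-g+1=g+n$ as precisely the condition under which one could prove finite dimensionality of $\im\Gamma_*$ and hence triviality of the involution on $A_1(X)$, so the obstruction advertised in the title is made explicit. Your proof establishes the theorem cleanly but bypasses that connection entirely.
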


\medskip

This result motivates the following:

\begin{corollary}
Suppose that we have a surface of general type $S$ with geometric genus zero and we have an involution $i$ on the surface $S$ having only finitely many fixed points. Suppose that there exists a very  ample line bundle $L$, on the minimal desingularization of the quotient surface $S/i$ such that the following equality
$$L^2-2g+1=n$$
is true, here $g$ is the genus of the smooth curves in the linear system $|L|$ and $n$ is some positive integer. Then the involution $i_*$ acts as identity on the group $A_0(S)$.
\end{corollary}

For the proof of the above theorem and the corollary, we follow the approach of the proof for the example of K3 surfaces due to Voisin as in \cite{Voi}. The proof involves two steps. First is that we invoke the notion of finite dimensionality in the sense of Roitman as in \cite{R1} and prove that the finite dimensionality of the image of a homomorphism from $A_0(T)$ to $A_1(X)$ (respectively from $A_0(S)\to A_0(S)$) implies that the homomorphism factors through the albanese map $A_0(T)\to Alb(T)$ (or $A_0(S)\to Alb(S)$ respectively). The second step is to show that, if we have the equality as above \ref{theorem3}, then the image of the homomorphism induced by the difference of the diagonal and the graph of the involution from $A_0(T)$ to $A_1(X)$ (or $A_0(S)\to A_0(S)$) is finite dimensional, yielding the $+1$ action of the involution on $A_1(X)$ or $A_0(S)$ respectively.

As an implication of the above corollary we obtain the Bloch's conjecture for the Craighero-Gattazzo surface of general type with geometric genus zero, studied in \cite{CG},\cite{DW}. This class of surfaces, is obtained as minimal resolution of singularities of  singular quintics in $\PR^3$ invariant under an involution and having four isolated, simple elliptic singular points.

{\small \textbf{Acknowledgements:} The author would like to thank the hospitality of IISER-Mohali, for hosting this project. The author is indebted Kapil  Paranjape for some useful conversations relevant to the theme of the paper. The author likes to thank Claire Voisin for her  advice on the theme of the paper. The author is indebted to J.L.Colliot-Thelene, B.Poonen, and the anonymous referee for finding out a crucial mistake in the earlier version of the manuscript.}

\medskip

{\small Assumption: We work over the field of complex numbers.}

\medskip

\section{Finite dimensionality in the sense of Roitman and one-cycles on cubic fourfolds}

Let $P$ be a subgroup of the group of algebraically trivial one cycles modulo rational equivalence on a smooth projective fourfold $X$, the latter is denoted by $A_1(X)$. Following \cite{R1}, we say that the subgroup $P$ is finite dimensional, if there exists a smooth projective variety $W$, and a correspondence $\Gamma$ on $W\times X$, of correct codimension, such that $P$ is contained in the set $\Gamma_*(W)$.

Let $X$ be a cubic fourfold. Consider a line $l$ on $X$ and project from $l$ onto $\PR^3$. Consider the blow up of $X$ along $l$. Then the blow up $X_l$ has a conic bundle structure over $\PR^3$. Let $S$ be the surface in $\PR^3$ such that for any closed point on $S$, the inverse image is the union of two lines in $\PR^3$. 
Let $T$ be the variety in $F(X)$ which is the double cover of $S$. Precisely it means the following. Let us consider
$$\bcU:=\{(l',x):x\in l', \pi_l(x)\in S \}$$
inside $F(X)\times X$. Then its projection to $F(X)$ is $T$ and we have a 2:1 map from $T$ to $S$, which is branched along finitely many points. So $T$ is surface.

Now for a hyperplane section $X_t$, let $l_1,l_2$ be two lines contained in $X_t$. By general position argument these two lines can be disjoint from $l$ inside $X$ and they are contained in $\PR^2$, so under the projection from $l$ they are mapped to two rational curves in $\PR^2$. Thus by Bezout's theorem they must intersect at a point $z$, so the inverse image of $z$ under the projection are two given lines $l_1,l_2$, which tells us that the map from $A_0(T_t)$ to $A_1(X_t)$ is onto, here $T_t$ is the double cover (for a general $t$) of $S_t$, where $S_t$ is the discriminant curve of the projection $\pi_l: X_t\to \PR^2$. This in turn says that $A_0(T)$ to $A_1(X)$ is onto, because $A_1(X)$ is generated by $A_1(X_t)$, where $t$ varies.

\begin{theorem}
\label{theorem1}
Let $Z$ be a correspondence supported on $T\times X$. Suppose that the image of $Z_*$ from $A_0(T)$ to $A_1(X)$ is finite dimensional. Then $Z_*$ factors through the albanese map of $T$.
\end{theorem}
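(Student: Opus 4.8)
\medskip
\noindent\textbf{Proof plan.} It is enough to show that $Z_*$ kills $\ker(\mathrm{alb}_T)\subseteq A_0(T)$, for then $Z_*$ descends to $A_0(T)/\ker(\mathrm{alb}_T)$, which maps injectively to $\mathrm{Alb}(T)$. The strategy is to upgrade the finite dimensionality of $\im(Z_*)$ to the statement that $\im(Z_*)$ is \emph{representable}: there exist an abelian variety $A$ and an injective regular homomorphism $\phi\colon \im(Z_*)\hookrightarrow A$. This is exactly the content of Roitman's analysis of finite dimensional subgroups in \cite{R1}, and once it is available the rest of the argument is formal.

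To obtain representability one can either quote \cite{R1} or reprove it by the standard spreading-out argument. Unravelling the hypothesis, there are a smooth projective $W$, a correspondence $\Gamma$ of the appropriate codimension on $W\times X$ and an integer $N$ such that every class of $\im(Z_*)$ has the shape $\Gamma_*\big(\sum_{i=1}^{N}([w_i]-[w_i'])\big)$. For each $n$ consider, inside $T^{n}\times T^{n}\times W^{N}\times W^{N}$, the locus where $Z_*\big(\sum_{i}([t_i]-[t_i'])\big)$ and $\Gamma_*\big(\sum_{j}([w_j]-[w_j'])\big)$ agree in $A_1(X)$. Because rational equivalence in an algebraic family is a countable union of Zariski closed conditions, this locus is a countable union of closed subvarieties, and by the bounded presentation above its projection to the irreducible variety $T^{n}\times T^{n}$ is surjective; hence one of these closed subvarieties dominates $T^{n}\times T^{n}$. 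Resolving it and passing to $n$ large enough that the images of $T^{n}\times T^{n}$ and of $W^{N}\times W^{N}$ in $A_1(X)$ have stabilised, one extracts the abelian variety $A$ together with the injective regular homomorphism $\phi$.

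Now let $\rho\colon A_0(T)\twoheadrightarrow \im(Z_*)$ be the homomorphism induced by $Z$. The composite $\phi\circ\rho\colon A_0(T)\to A$ is a regular homomorphism: a correspondence carries an algebraic family of zero cycles on $T$ over a smooth base to an algebraic family of one cycles on $X$, all lying in $\im(Z_*)$, and $\phi$ by construction turns such a family into a morphism to $A$. Applying this to the tautological family $\{[t]-[t_0]\}_{t\in T}$ (with $t_0\in T$ a fixed point) yields a morphism of varieties $\mu\colon T\to A$ with $\mu(t_0)=0$ and $\mu(t)=\phi\big(\rho([t]-[t_0])\big)$ for every $t$. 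By the universal property of the Albanese variety of the smooth projective surface $T$, $\mu$ factors as $\mu=\bar\mu\circ\mathrm{alb}_T$ with $\bar\mu\colon\mathrm{Alb}(T)\to A$ a homomorphism. Both $\phi\circ\rho$ and $\bar\mu\circ\mathrm{alb}_T$ are group homomorphisms agreeing on every $[t]-[t_0]$, hence they coincide on all of $A_0(T)$; since $\phi$ is injective, $\rho=\phi^{-1}\circ\bar\mu\circ\mathrm{alb}_T$, that is, $Z_*$ factors through $\mathrm{alb}_T$.

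The delicate point is the passage from finite dimensionality to representability in the second paragraph, i.e.\ producing an honest abelian variety together with an \emph{injective} and \emph{regular} homomorphism out of $\im(Z_*)$. This is precisely where \cite{R1} enters in an essential way; the spreading-out has to be handled with care — rational equivalence is only a \emph{countable} union of closed conditions, the chosen incidence component must be resolved without losing dominance over $T^{n}\times T^{n}$, and one has to check that the resulting regular homomorphism is independent of $n$. Everything after that — the regularity of $\phi\circ\rho$, the existence of the morphism $\mu\colon T\to A$, and the factorisation through $\mathrm{alb}_T$ — is soft.
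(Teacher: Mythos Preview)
Your strategy is genuinely different from the paper's, and as you yourself flag, it rests entirely on the passage from finite dimensionality to representability of $\im(Z_*)$. That step, however, is not available as a black box. What \cite{R1} proves is that if the \emph{entire} group $A_0(V)$ of a smooth projective variety is finite dimensional then $\mathrm{alb}_V$ is an isomorphism; it does not hand you, for an arbitrary finite-dimensional subgroup $P\subset A_1(X)$, an abelian variety $A$ together with an \emph{injective} regular homomorphism $P\hookrightarrow A$. Your spreading-out sketch does not fill the gap either: the phrase ``one extracts the abelian variety $A$'' is precisely the missing content, and no mechanism is given for producing $A$ or for securing injectivity. In fact the existence of such an injective $\phi$ is essentially equivalent to the statement you are trying to prove: once $Z_*$ factors through $\mathrm{alb}_T$ its image is a quotient of $\mathrm{Alb}(T)$ and hence already sits in an abelian variety, while your third paragraph gives the converse implication. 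So the proposal defers the whole difficulty to an unproved lemma of the same strength.

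The paper proceeds by a direct geometric argument that bypasses representability. Take a smooth hyperplane section $j:C\hookrightarrow T$ and set $K(C)=\ker\big(J(C)\to\mathrm{Alb}(T)\big)$. A monodromy (Picard--Lefschetz) argument shows that for general $C$ in a Lefschetz pencil the abelian variety $K(C)$ is \emph{simple}; this is the key lemma. Choosing the linear system sufficiently ample forces $\dim K(C)>\dim W$, where $W$ is the parameter variety witnessing finite dimensionality. Inside $K(C)\times W$ one forms the countable union of closed subsets $R=\{(k,w):Z_*j_*(k)=\Gamma_*(w)\}$; since $R$ surjects onto $K(C)$ some irreducible component $R_0$ dominates it, hence $\dim R_0\geq\dim K(C)>\dim W$ and the fibres of $R_0\to W$ are positive dimensional. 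Each such fibre lies in the simple abelian variety $K(C)$ and therefore generates it, while $Z_*j_*$ is constant along the fibre; hence $Z_*j_*$ vanishes on all of $K(C)$. Finally, given any degree-zero cycle $z$ on $T$ with $\mathrm{alb}_T(z)=0$, one blows up $T$ along the support of $z$, chooses an ample class on the blow-up whose general member passes through all the exceptional points, and repeats the argument for that curve to conclude $Z_*(z)=0$. The simplicity of $K(C)$ is doing exactly the job your injective $\phi$ was supposed to do, but it is proved rather than assumed.
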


\begin{proof}
The proof of this theorem follows the approach of \cite{Voi}[Theorem 2.3]. Since $Z_*$ has finite dimensional image, there exists a smooth projective variety $W$ and a correspondence $\Gamma$ supported on $W\times X$ such that image of $Z_*$ is contained in $\Gamma_*(W)$. Let $C$ inside $T$ be a smooth, hyperplane section (after fixing an embedding of $T$ into a projective space). Then by Lefschetz theorem on hyperplane sections we have that $J(C)$ maps onto $Alb(T)$. So the kernel is an abelian variety, denoted by $K(C)$. First we prove the following.

\begin{lemma}
The abelian variety $K(C)$ is simple for a general hyperplane section $C$ of $T$.
\end{lemma}
\begin{proof}
The proof of this lemma follows the approach of \cite{Voi}[Proposition 2.4]. Let if possible there exists a non-trivial proper abelian subvariety $A$ inside $K(C)$. Now $K(C)$ corresponds to the Hodge structure
$$\ker(H^1(C,\QQ)\to H^3(T,\QQ))\;.$$
Let $T\to D$ be a Lefschetz pencil such that a smooth fiber is $C$. Then the fundamental group $\pi_1(D\setminus 0_1,\cdots,0_m,t)$ acts irreducibly on the Hodge structure mentioned above, \cite{Vo}[Theorem 3.27]. Here $t$ corresponds to the smooth fiber $C$. Now the abelian variety $A$ corresponds to a Hodge sub-structure $H$ inside the above mentioned Hodge structure. Let $A_D$ be the base change of $A$ over the spectrum of the function field $\CC(D)$. For convenience, let us continue to denote $A_D$ by $A$. Then consider a finite extension $L$ of $\CC(D)$ inside $\overline{\CC(D)}$, such that $A$, $K(C)$ are defined over $L$. Then we spread $A,K(C)$, over a Zariski open $U'$ in $D'$, where $\CC(D')=L$ and $D'$ is a smooth, projective curve which maps finitely onto $D$. Denote these spreads by $\bcA,\bcK$ over $U'$. By throwing out more points from $U'$ we get that $\bcA\to U', \bcK\to U'$ are fibrations, of the underlying smooth manifolds. So the fundamental group $\pi_1(U',t')$ acts on $H$, which is the $2d-1$-th cohomology of $A$ ($d=\dim(A)$), and on $\ker(H^1(C,\QQ)\to H^3(T,\QQ))$. Since $U'$ maps finitely onto a Zariski open $U$ of $D$, we have that $\pi_1(U',t')$ is a finite index subgroup of $\pi_1(U,t)$. Now it is a consequence of the Picard-Lefschetz formula that $H$ is a $\pi_1(U,t)$ stable subspace of $\ker(H^1(C,\QQ)\to H^3(T,\QQ))$. The latter is irreducible under the action of $\pi_1(U,t)$. So we get that $H$ is either zero or all of $\ker(H^1(C,\QQ)\to H^3(T,\QQ))$. Therefore by the equivalence of abelian varieties and weight one, polarized Hodge structures, $A$ is either zero or all of $K(C)$.

\end{proof}

Now consider sufficiently ample hyperplane sections of $T$, so that the dimension of $K(C)$ is arbitrarily large, and hence strictly greater than $\dim(W)$. Consider the subset $R$ of $K(C)\times W$, consisting of pairs $(k,w)$ such that
$$Z_*j_*(k)=\Gamma_*(w)$$
here $j: C\to T$ is the closed embedding of $C$ into $T$. Since the image of $Z_*$ is finite dimensional, the projection from $R$ onto $K(C)$ is surjective. By the Mumford-Roitman argument on Chow varieties \cite{R}, $R$ is a countable union of Zariski closed subsets in the product $K(C)\times W$. By the uncountability of the field of complex numbers it follows that some component $R_0$ of $R$, dominates $K(C)$. Therefore we have that
$$\dim(R_0)\geq \dim(K(C))>\dim (W)\;.$$
So the fibers of the map $R_0\to W$ are positive dimensional. Since the abelian variety $K(C)$ is simple, the fibers of $R_0\to W$ generate the  abelian variety $K(C)$. So for any zero cycle $z$ supported on the fibers of $R_0\to W$, we have that
$$Z_*j_*(z)=\deg(z)\Gamma_*(w)$$
since $z$ is of degree zero, it follows that $Z_*$ vanishes on the fibers of $R_0\to W$, which is positive dimensional, hence on all of $K(C)$, by the simplicity of $K(C)$.

Now to prove that the map $Z_*$ factors through $alb$, we consider a zero cycle $z$ of degree zero, which is given by a tuple of $2k$ points for a fixed positive integer $k$. Then we blow up $T$ along these points, denote the blow up by $\tau:T'\to T$. Let $E_i$'s be the exceptional divisor of the blow up, we choose $H$ in $\Pic(T)$, such that $L=\tau^*(H)-\sum_i E_i$ is ample (this can be obtained by Nakai Moisezhon-criterion for ampleness). Now consider a sufficiently large, very ample multiple of $L$, and apply the previous method to a general member $C'$ of the corresponding linear system. Then $K(C')$ is a simple abelian variety. Also $\tau(C')$ contains all the points at which we have blown up. Suppose that the corresponding cycle $z$ is annihilated by $alb_T$, then any of its lifts to $T'$ say $z'$, is annihilated by $alb_{T'}$ and is supported on $K(C')$. So applying the previous argument to the correspondence $Z'=Z\circ \tau$, we have that
$$Z_*(z)=Z'_*(z')=0\;.$$

\end{proof}

Let $i$ be the involution on $T$, then this involution induces an involution on $A_1(X)$. Consider the homomorphism given by the difference of identity and the induced involution on $A_1(X)$, call it $Z_{1*}$. It is clear from  \ref{theorem1} that the image of $Z_*Z_{1*}$ cannot be finite dimensional, otherwise the involution will act as $+1$ on $A_1(X)$, leading to the fact that $A_1(X)=\{0\}$. Now we prove the following:

\begin{theorem}
\label{theorem2}
Let $S$ be the discriminant surface, mentioned above. Then for any very ample line bundle $L$ on $S$ the equality
$$L^2-g+1=g+n$$
cannot hold, where $g$ is the genus of a curve in the complete linear system of $L$ and $n$ is a positive integer.
\end{theorem}

\begin{proof}
The proof of this theorem follows the approach of \cite{Voi}[Proposition 2.5]. The discriminant surface $S$ is a quintic, hence its irregularity is zero. Consider a very ample line bundle $L$ on the quintic $S$. Let $g$ be the genus of a smooth curve in the linear system $|L|$. Now we calculate the dimension of $|L|$. Consider the exact sequence
$$0\to \bcO(C)\to \bcO(S)\to \bcO(S)/\bcO(C)\to 0$$
tensoring with $\bcO(-C)$ we have
$$0\to \bcO(S)\to \bcO(-C)\to\bcO(-C)|_C\to 0\;.$$
Taking sheaf cohomology we have
$$0\to \CC\to H^0(S,L)\to H^0(C,L|_C)\to 0$$
since the irregularity of the surface is zero.
On the other hand by Nakai-Moisezhon criterion the intersection number $L|_C$ is positive, so $L$ restricted to $C$ has positive degree, by Riemann-Roch this implies that
$$\dim(H^0(C,L|_C))=L^2-g+1\;,$$
provided that we have the equality
$$L^2-g+1=g+n$$
for some positive integer $n$.
Then the linear system of $L$ is of dimension $g+n$. Now consider the smooth, projective curves $C$ in this linear system $|L|$ and their double covers $\wt{C}$ (this is actually a covering for a general $C$, as the map $T\to S$ is branched along a finite set of points). By Bertini's theorem a general $\wt{C}$ is smooth. By the Hodge index theorem it follows that, $\wt{c}$ is connected. If not, suppose that it has two components $C_1,C_2$. Since $C^2>0$, we have $C_i^2>0$ for $i=1,2$ and since $\wt{C}$ is smooth we have that $C_1.C_2=0$. Therefore the intersection form restricted to $\{C_1,C_2\}$ is semipositive. This can only happen when $C_1$, $C_2$ are proportional and $C_i^2=0$, for $i=1,2$, which is not possible.

Now let $(t_1,\cdots, t_{g+n})$ be a point on $T^{g+n}$, which gives rise to the tuple $(s_1,\cdots,s_{g+n})$ on $S^{g+n}$, under the quotient map. There exists a unique, smooth curve $C$ containing all these points (if the points are in general position). Let $\wt{C}$ be its double cover on $T$. Then $(t_1,\cdots,t_{g+n})$ belongs to $\wt{C}$. Consider the zero cycle
$$\sum_i t_i-\sum_i i_*(t_i)$$
this belongs to the image of  $P(\wt{C}/C)$ in $A_0(T)$, $P(\wt{C}/C)$ is the Prym variety corresponding to the double cover. So the image of
$$\sum_i \left(Z_*(t_i)-i_*Z_*(t_i)\right)$$
is an element in the image of this Prym variety under the homomorphism
$$A_0(T)\to A_1(X)\;.$$
So the map
$$T^{g+n}\to A_1(X)$$
given by
$$(t_1,\cdots,t_{g+n})\mapsto \sum_i Z_*(t_i)-i_*Z_*(t_i) $$
factors through the Prym fibration $\bcP(\wt {\bcC}/\bcC)$, given by
$$(t_1,\cdots,t_{g+n})\mapsto alb_{\wt{C}}\left(\sum_i t_i-i(t_i)\right)$$
here $\bcC, \wt{\bcC}$ are the universal smooth curve and the universal double cover of $\bcC$ over $|L|_0$ parametrizing the smooth curves in  the linear system $|L|$. By dimension count, the dimension of $\bcP(\wt {\bcC}/\bcC)$ is $2g+n-1$. On the other hand we have that dimension of $T^{g+n}$ is $2g+2n$. So the map
$$T^{g+n}\to \bcP(\wt {\bcC}/\bcC)$$
has positive dimensional fibers, and hence the map
$$T^{g+n}\to A_1(X)$$
has positive dimensional fibers. So the general fiber of $$T^{g+n}\to A_1(X)$$ contains a curve. Let $H$ be the hyperplane bundle pulled back onto the quintic surface $S$. It is very ample. Pull it back further onto $T$, to get an ample line bundle on $T$. Call it $L'$. Then the divisor $\sum_i \pi_i^{-1}(L')$ is ample on $T^{g+n}$, where $\pi_i$ is the $i$-th co-ordinate projection from $T^{g+n}$ to $T$. Therefore the curves in the fibers of the above map intersect the divisor $\sum_i \pi_i^{-1}(L')$.
So we get that there exist points in $F_s$ (the general fiber over a cycle $s$ in $A_1(X)$) contained in $C\times T^{g+n-1}$ where $C$ is in the linear system of $L'$. Then consider the elements of $F_s$ the form $(c,s_1,\cdots,s_{g+n-1})$, where $c$ belongs to $C$. Considering the map from $T^{g+n-1}$ to $A_1(X)$ given by
$$(s_1,\cdots,s_{g+n-1})\mapsto Z_*(\sum_i s_i+c-\sum_i i(s_i)-i(c))\;,$$
we see that this map factors through the Prym fibration and the map from $T^{g+n-1}$ to $\bcP(\wt{\bcC}/\bcC)$ has positive dimensional fibers, since $n$ is large. So it means that, if we consider an element $(c,s_1,\cdots,s_{g+n-1})$ in $F_s$ and a curve through it, then it intersects the ample divisor given by $\sum_i \pi_i^{-1}(L')$, on $T^{g+n-1}$. Then we have some of $s_i$ is contained in $C$. So iterating this process we get that elements of $F_s$ are supported on $C^k\times T^{g+n-k}$, where $k$ is some natural number depending on $n$. Note that the genus of $C$ is fixed and equal to $11$ and less than $k$ and for a choice of a large multiple of the very ample line bundle $L$. Thus the elements of $F_s$ are supported on $C^{n_0}\times T^{g+n-k}$.
Therefore considering $\Gamma=Z_1\circ Z$, we get that $\Gamma_*(T^{g+n})=\Gamma_*(T^{m_0})$, where $m_0$ is strictly less than $g+n$.

Now we prove by induction that $\Gamma_*(T^{m_0})=\Gamma_*(T^m)$ for all $m\geq g+n$.
So suppose that $\Gamma_*(T^k)=\Gamma^*(T^{m_0})$ for $k\geq g+n$, then we have to prove that $\Gamma_*(T^{k+1})=\Gamma_*(T^{m_0})$. So any element in $\Gamma_*(T^{k+1})$ can be written as  $\Gamma_*(t_1+\cdots+t_{m_0})+\Gamma_*(t)$. Now let $k-m_0=m$, then $m_0+1=k-m+1$. Since $k-m<k$, we have $k-m+1\leq k$, so $m_0+1\leq k$, so we have the cycle
$$\Gamma_*(t_1+\cdots+t_{m_0})+\Gamma_*(t)$$
supported on $T^k$, hence on $T^{m_0}$. So we have that $\Gamma_*(T^{m_0})=\Gamma_*(T^k)$ for all $k$ greater or equal than $g+n$. Now any element $z$ in $A_0(T)$, can be written as a difference of two effective cycle $z^+,z^-$ of the same degree. Then we have
$$\Gamma_*(z)=\Gamma_*(z^+)-\Gamma_*(z_-)$$
and $\Gamma_(z_{\pm})$ belong to $\Gamma_*(T^{m_0})$. So let $\Gamma'$ be the correspondence on $T^{2m_0}\times T$ defined as
$$\sum_{l\leq m_0}(pr_{l},pr_T)^*\Gamma-\sum_{m_0+1\leq l\leq 2m_0}(pr_l,pr_T)^* \Gamma$$
where $\pr_l$ is the $l$-th projection from $T^l$ to $T$, and $\pr_T$ is from $T^{2m_0}\times T$ to the last copy of $T$. Then we have
$$\im(\Gamma_*)=\Gamma'_*(T^{2m_0})\;.$$
This would imply that the image of $\Gamma_*$ is finite dimensional, so by \ref{theorem1} we have that the induced involution on $A_1(X)$ acts as identity. The involution acts as $-\id$ on $A_1(X)$. Hence all elements of $A_1(X)$ is a $2$-torsion. This will lead to a contradiction to the fact that $A_1(X)$ is infinite dimensional \cite{SC}.
\end{proof}

Now we proceed to the proof of the corollary stated in the introduction regarding the generalised Bloch conjecture on surfaces of general type with geometric genus zero and with an involution $i$. The result is as follows:

\begin{corollary}
\label{cor1}
Suppose that we have a surface of general type $S$ with geometric genus zero and we have an involution $i$ on the surface $S$ having only finitely many fixed points. Suppose that there exists a very  ample line bundle $L$, on the minimal desingularization of the quotient surface $S/i$ (by the involution) such that the following equality
$$L^2-2g+1=n$$
is true, here $g$ is the genus of the smooth, projective curves in the linear system $|L|$, and $n$ is some positive integer. Then the involution $i_*$ acts as identity on the group $A_0(S)$.
\end{corollary}

\begin{proof}
Consider the resolution of singularity of the  surface $S/i$. It is the quotient by the involution acting on the surface $\wt{S}$, obtained by blowing up the isolated fixed points of $i$ acting on $S$. Call this quotient $\wt{S}/i$. Since it is dominated by a surface of irregularity zero (namely $\wt{S}$), it has irregularity zero. Consider a very ample line bundle $L$ on $\wt{S}/i$. Let $g$ be the genus of a smooth, projective curve in the linear system $|L|$. Now we calculate the dimension of $|L|$. Consider the exact sequence
$$0\to \bcO(C)\to \bcO(\wt{S}/i)\to \bcO(\wt{S}/i)/\bcO(C)\to 0$$
tensoring with $\bcO(-C)$ we get
$$0\to \bcO(\wt{S}/i)\to \bcO(-C)\to\bcO(-C)|_C\to 0\;.$$
Taking sheaf cohomology we get
$$0\to \CC\to H^0(\wt{S}/i,L)\to H^0(C,L|_C)\to 0$$
since the irregularity of the surface $\wt{S}/i$ is zero.
On  the other hand by Nakai-Moiseshon criterion the intersection number $L|_C$ is positive, so $L$ restricted to $C$ has positive degree, by Riemann-Roch this implies
$$\dim(H^0(C,L|_C))=L^2-g+1\;, $$
provided that we have the equality
$$L^2-g+1=g+n$$
for some positive integer $n$.
Then the linear system of $L$ is of dimension $g+n$. Now consider a smooth, projective curves $C$ in this linear system $|L|$ and its branched double cover $\wt{C}$, branched along the intersection of $\wt{C}$ with $E_i$, where $E_i$'s are the exceptional curves arising from the blow up $\wt{S}\to S$. By Bertini's theorem a general $\wt{C}$ is smooth. By the Hodge index theorem it follows that, it is connected. If not, suppose that it has two components $C_1,C_2$. Since $C^2>0$, we have $C_i^2>0$ for $i=1,2$ and since $\wt{C}$ is smooth we have that $C_1.C_2=0$. Therefore the intersection form restricted to $\{C_1,C_2\}$ is semipositive. This can only happen when $C_1$, $C_2$ are proportional and $C_i^2=0$, for $i=1,2$, which is not possible as $C_1+C_2$ is ample on $\wt{S}$.

Now let $(t_1,\cdots, t_{g+n})$ be a point on $\wt{S}^{g+n}$, which gives rise to the tuple $(s_1,\cdots,s_{g+n})$ on $(\wt{S}/i)^{g+n}$, under the quotient map. There exists a unique, smooth curve $C$ containing all these points (if the points are in general position). Let $\wt{C}$ be its branched double cover of $C$ in $\wt{S}$. Then $(t_1,\cdots,t_{g+n})$ belongs to $\wt{C}$. Consider the zero cycle
$$\sum_i t_i-\sum_i i_*(t_i)$$
this belongs to $P(\wt{C}/C)$, which is the Prym variety corresponding to the double cover $\wt{C}\to C$. So the image of
$$\sum_i \left(t_i-i_*(t_i)\right)$$
under the push-forward $j_{\wt{C}*}$
is an element in the image  under the homomorphism
$$\id-i_*: A_0(\wt{S})\to A_0(\wt{S})$$
 So the map
$$\wt{S}^{g+n}\to A_0(\wt{S})$$
given by
$$(t_1,\cdots,t_{g+n})\mapsto \sum_i (t_i-i_*(t_i)) $$
factors through the Prym fibration $\bcP(\wt {\bcC}/\bcC)$, given by
$$(t_1,\cdots,t_{g+n})\mapsto alb_{\wt{C}}\left(\sum_i t_i-i(t_i)\right)$$
here $\bcC, \wt{\bcC}$ are the universal family of smooth curves in $|L|$ and the universal double cover of $\bcC$ respectively, over $|L|_0$ parametrizing the smooth curves in the linear system $|L|$. By dimension count the dimension of $\bcP(\wt {\bcC}/\bcC)$ is $2g+n-1+m/2$, where $m$ is the number of branch points on the curve $\wt{C}$ counted with multiplicities. On the other hand we have that dimension of ${\wt{S}}^{g+n}$ is $2g+2n$. So the map
$${\wt{S}}^{g+n}\to \bcP(\wt {\bcC}/\bcC)$$
has fiber dimension equal to
$$2g+2n-2g-n+1-m/2=n+1-m/2\;.$$
Considering a large multiple of the very ample line bundle $L$, we can assume that the above number is positive. Indeed we have
$$L^2-2g+1=-L.K_{\wt{S}/i}-1=n>0$$
and $$K_{\wt{S/i}}=f^*(K_{S/i})+E$$
where $E$ is the exceptional divisor, $f$ is the regular map from $\wt{S}/i$ to $S/i$. Here we consider $\wt{S}$ is the blow up of $S$ along the unique fixed point of $i$. The calculation for finitely many fixed points greater than one is similar. Let $L$ be equal to $m'f^*(H)-m'E$ which is very ample, where $H$ is a very ample line bundle on $S/i$, after fixing an embedding into some projective space. Then we have to prove that

$$L.(-2K_{\wt{S}/i}-E)-2>0$$

that is

$$L.(-2f^*(K_{S/i})-3E)-2>0$$

putting the expression of $L$, the condition to be proven is

$$-(m'(f^*(H)-E)(2f^*(K_{S/i})+3E))-2=-2m'f^*(H).f^*(K_{S/i})-3m'-2>0$$

But by the adjunction formula on $\wt{S}/i$ we have

$$L^2-2g+1=-L.K_{\wt{S}/i}-1$$

on the other hand

$$L^2-2g+1>0$$

by the assumption of the theorem.
Therefore

$$-m'f^*(H).f^*(K_{S/i})-m'-1=-m'f^*(H).f^*(K_{S/i})-(m'+1)>0$$

so

$$-2m'f^*(H).f^*(K_{S/i})> 2m'+2\;.$$

Therefore choosing $l>3$, such that $m'f^*(lH)-m'E$ is very ample, we have
$$-2f^*(lm'H).f^*(K_{S/i})> 2l(m'+1)>3m'+2$$
for large values of $l$.
Also note that for $L=m'(f^*(lH)-E)$, $l>1$ we have
$$L^2-2g+1=-L.K_{\wt{S/i}}-1=-f^*(m'lH).f^*(K_{S/i})-m'-1=-m'lf^*(H).f^*(K_{S/i})-m'-1$$
we know that
$$-m'f^*(H).f^*(K_{S/i})>m'+1$$
so
$$-m'lf^*(H).f^*(K_{S/i})-m'-1>(m'l-1)(m'+1)> 0\;.$$
So for $L=f^*(m'lH)-m'E$ we have the equality
$$L^2-2g+1=n$$
for some positive integer $n$.

 So the fiber contains a curve. Let $H$ be the hyperplane bundle pulled back onto the surface $\wt{S}/i$, after fixing an embedding of $\wt{S}/i$ into some projective space. It is very ample. Pull it back further onto $\wt{S}$, to get an ample line bundle on $\wt{S}$. Call it $L'$. Then the divisor $\sum_i \pi_i^{-1}(L')$ is ample on $\wt{S}^{g+n}$, where $\pi_i$ is the $i$-th co-ordinate projection from $\wt{S}^{g+n}$ to $\wt{S}$. Therefore the curves in the fibers of the above map intersect the divisor $\sum_i \pi_i^{-1}(L')$.
So there exist points in $F_s$ (the general fiber of $\wt{S}^{g+n}\to A_0(\wt{S})$ over a cycle $s$ in $A_0(\wt{S})$) contained in $C\times \wt{S}^{g+n-1}$ where $C$ is in the linear system of $L'$. Then consider the elements of $F_s$ the form $(c,s_1,\cdots,s_{g+n-1})$, where $c$ belongs to $C$. Considering the map from $\wt{S}^{g+n-1}$ to $A_0(\wt{S})$ given by
$$(s_1,\cdots,s_{g+n-1})\mapsto (\sum_i s_i+c-\sum_i i_*(s_i)-i_*(c))\;,$$
we see that this map factors through the Prym fibration and the map from $\wt{S}^{g+n-1}$ to $\bcP(\wt{\bcC}/\bcC)$ has positive dimensional fibers, by choosing $l$ and hence $n$ to be large. So, if we consider an element $(c,s_1,\cdots,s_{g+n-1})$ in $F_s$ and a curve through it, then it intersects the ample divisor given by $\sum_i \pi_i^{-1}(L')$, on $\wt{S}^{g+n-1}$. Then we have some of $s_i$ is contained in $C$. So iterating this process we have, the elements of $F_s$ are supported on $C^k\times \wt{S}^{g+n-k}$, where $k$ is some natural number depending on $n$. Note that the genus of $C$ is fixed and it is less than $k$ for a choice of a very large multiple of the very ample line bundle $L$. Thus the elements of $F_s$ are supported on $C^{n_0}\times \wt{S}^{g+n-k}$.
Therefore considering $\Gamma=\Delta_{\wt{S}}-Gr(i)$, we get that $\Gamma_*(\wt{S}^{g+n})=\Gamma_*(\wt{S}^{m_0})$, where $m_0$ is strictly less than $g+n$.

Now we prove by induction that $\Gamma_*(\wt{S}^{m_0})=\Gamma_*(\wt{S}^m)$ for all $m\geq g+n$.
So suppose that $\Gamma_*(\wt{S}^k)=\Gamma^*(\wt{S}^{m_0})$ for $k\geq g+n$, then we have to prove that $\Gamma_*(\wt{S}^{k+1})=\Gamma_*(\wt{S}^{m_0})$. So any element in $\Gamma_*(\wt{S}^{k+1})$ can be written as  $$\Gamma_*(t_1+\cdots+t_{m_0})+\Gamma_*(t)$$ Now let $k-m_0=m$, then $m_0+1=k-m+1$. Since $k-m<k$, we have $k-m+1\leq k$, so $m_0+1\leq k$, so we have the cycle
$$\Gamma_*(t_1+\cdots+t_{m_0})+\Gamma_*(t)$$
supported on $\wt{S}^k$, hence on $\wt{S}^{m_0}$. So we have $$\Gamma_*(\wt{S}^{m_0})=\Gamma_*(\wt{S}^k)$$ for all $k$ greater or equal than $g+n$. Now any element $z$ in $A_0(\wt{S})$, can be written as a difference of two effective cycles $z^+,z^-$ of the same degree. Then we have
$$\Gamma_*(z)=\Gamma_*(z^+)-\Gamma_*(z_-)$$
and $\Gamma_(z_{\pm})$ belong to $\Gamma_*(\wt{S}^{m_0})$. So let $\Gamma'$ be the correspondence on $\wt{S}^{2m_0}\times \wt{S}$ defined as
$$\sum_{l\leq m_0}(pr_{l},pr_{\wt{S}})^*\Gamma-\sum_{m_0+1\leq l\leq 2m_0}(pr_l,pr_{\wt{S}})^* \Gamma$$
where $\pr_l$ is the $l$-th projection from $\wt{S}^l$ to $\wt{S}$, and $\pr_{\wt{S}}$ is from ${\wt{S}}^{2m_0}\times \wt{S}$ to the last copy of $\wt{S}$. Then we have
$$\im(\Gamma_*)=\Gamma'_*(\wt{S}^{2m_0})\;.$$
This would imply that the image of $\Gamma_*$ is finite dimensional, so as proved in \cite{Voi}[Theorem 2.3] the induced involution on $A_0(\wt{S})$ factors through the Albanese variety of $\wt{S}$ which is trivial. Hence $i_*$ acts as identity on $A_0(\wt{S})$. By the blow up formula
$$A_0(\wt{S})\cong A_0(S)$$
hence the involution $i_*$ acts as identity on $A_0(S)$.
\end{proof}

\begin{remark}
\label{rem1}
Suppose in the above corollary \ref{cor1} we have the fixed locus of the involution consisting of finitely many isolated fixed points and one rational curve. Then on $\wt{S}/i$ we have to prove that the number
$$L.(-2K_{\wt{S}/i}-\sum_j E_j-R)-2>0$$
Here $R$ is the strict transform of the rational curve component in the fixed locus, $E_j$ is the exceptional curve over the isolated fixed point $p_j$.
Putting $$L=m(f^*(H)-\sum_j E_j)$$
we have to prove that
$$-m(f^*(H)-\sum_j E_j)(2f^*(K_{S/i})+3\sum_j E_j+R)-2>0$$
So for simplicity let us assume that the number of isolated fixed point is one, so there is one exceptional divisor.
Thus we have to prove that
$$-m(f^*(H)- E)(2f^*(K_{S/i})+3E+R)-2>0$$
that is
$$-2mf^*(H)f^*(K_{S/i})-3m-2-mf^*(H).R>0$$
Since $R=f^*(L)$ where $L$ is a line in $S/i$, we have
$$f^*(H).f^*(L)=f^*(H.L)=f^*(p)=2p$$
Putting this in the above equation
$$-2mf^*(H).f^*(K_{S/i})-3m-2-2m=-mf^*(H)f^*(K_{S/i})-5m-2$$
an it has to be greater than zero. By choosing as before $lH$ in place of $H$ and assuming that $f^*(lmH)-mE$ and $f^*(mH)-mE$ are both very ample, we have
$$-2mf^*(lH).f^*(K_{S/i})>2l(m+1)$$
and $$2l(m+1)>5m+2$$ for high values of $l$. Therefore in this case also the argument of \ref{cor1} works and we get that the involution acts as identity on $A_0(S)$.
\end{remark}

\begin{example}
Let $F$ be a singular quintic, invariant under an involution on $\PR^3$ and having simple elliptic singularities at the points
$$(1:0:0:0), (0:1:0:0), (0:0:1:0), (0:0:0:1)$$
as studied in \cite{DW}[section 2]. Let us consider the minimum desingularization of this surface $F$ and call it $V$. This surface $V$ is a smooth, projective surface of general type with $p_g=q=0$, equipped with an involution. The fixed locus of the involution on $F$ consists of a line and five isolated fixed points. These five points are different from the singular points of $F$. Let us consider the pre-images of these five points on $V$. They are the isolated fixed points of the involution on $V$. Consider the blow-up of $V$ at the five isolated fixed points of the involution on $V$. Denote it by $V'$. This surface $V'$ is equipped with an involution $i$. Then it is proven in \cite{DW}[proposition 3.1], that $V'/i$ is a non-singular, rational surface. So by the above remark, \ref{rem1}, the involution acts as identity on $A_0(V')$, provided that there exists a line bundle $L$ on $V'/i$ such that
$$L.(-K_{V'/i})-1>0\;.$$
Following the discussion in \cite{DW}[discussion after proposition 3.1] we consider the minimal model of $V'/i$. Call it $S$, it is a minimal elliptic surface as mentioned in \cite{DW}[discussion after proposition 3.1]. For this $S$ we have
$$K_S^2=0$$
then by Riemann-Roch
$$h^0(-K_S)\geq K_{S}^2+\chi (\bcO_S)+1=1+1=2$$
as $h^0(2K_S)=-1$ ($S$ is rational, so $|2K_S|=\emptyset$) and $\chi(\bcO_S)=1$. Therefore for a very ample line bundle of large degree on $S$, we have
$$-L.K_S-1>0\;.$$
Now by construction, as in \cite{DW}, the surface $S$ is a contraction of $V'/i$ along two elliptic curves of self-intersection $-1$. Let $\pi$ be the blow-down map from $V'/i$ to $S$. Therefore for a very ample line bundle $$L=\pi^*(L')-E_1-E_2$$ and $$K_{V'/i}=\pi^*(K_S)+E_1+E_2$$ on $V'/i$, we have
$$-(\pi^*(L')-E_1-E_2)(\pi^*(K_S)+E_1+E_2)-1=-\pi^*(L'.K_S)-3>0$$
for some very ample line bundle of the form $$L=m\pi^*(L')-E_1-E_2\;.$$ Here $m$ is a very large positive integer. Thus we have
$$-L.K_{V'/i}-1>0\;.$$
Therefore there exists a line bundle $L$ on $V'/i$ such that
$$L^2-2g+1=n$$
for some positive integer $n$, here $g$ is the genus of a smooth curve in $|L|$, as required in the condition of the corollary \ref{cor1}.
Since $V'/i$ is rational, the involution acts also as $-1$ resulting to the fact that every element in $A_0(V')$ is $2$-torsion and hence by Roitman's theorem $A_0(V')=\{0\}$ (as $q=0$ for $V'$). Since by the blow up formula
$$A_0(V)\cong A_0(V')$$
we have $A_0(V)=\{0\}$. Thus the Bloch's conjecture holds on $V$.
\end{example}

\subsection{Generalization of the above result}

The technique of the proof of \ref{theorem2} is more general, in the sense that we only use the conic bundle structure of the cubic fourfold and the conic bundle structure on the hyperplane sections of the cubic fourfold. Suppose that we consider a fourfold $X$, which is unirational, so contains sufficiently many lines. Now consider a fixed line $l$ on $X$, and project onto $\PR^3$ from this line. Suppose that the discriminant surface $S$ inside $\PR^3$ 
admits a double cover $T$ of $S$ branched along finitely many points, inside the Fano variety of lines $F(X)$ of $X$.

The proof of \ref{theorem2} tells us that we have  the following theorem:

\begin{theorem}
Let $X$ be a fourfold embedded in $\PR^5$, which admits a conic bundle structure. Let $S$ denote the discriminant surface for the conic bundle structure such that it admits a branched cover at finitely many points.  Then for any very ample line bundle $L$ on $S$, we cannot have the equality
$$L^2-g+1=g+n$$
where $g$ is the genus of a curve in the linear system of $L$ and $n$ is a positive integer.
\end{theorem}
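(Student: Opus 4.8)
The plan is simply to run the argument proving Theorem~\ref{theorem2} in this broader setting, because that argument never uses anything about the cubic beyond the conic bundle structure on $X$ and the conic bundle structures on its hyperplane sections $X_t$; I will indicate the steps and isolate the two points where a property of the cubic entered and must be re-established. Projecting $X$ from a line $l\subset X$ onto $\PR^3$ and proceeding as in Section~2 produces the discriminant surface $S\subset\PR^3$, its double cover $T\subset F(X)$ carrying the natural involution $i$, and a correspondence $Z$ on $T\times X$ whose induced map $Z_*\colon A_0(T)\to A_1(X)$ is surjective (by the Bezout argument of Section~2 applied fibrewise over the hyperplane sections). Let $Z_{1*}$ be the difference of the identity and the induced involution on $A_1(X)$, realised by a correspondence $Z_1$ on $X\times X$, and set $\Gamma=Z_1\circ Z$, a correspondence on $T\times X$ with $\Gamma_*(z)=Z_*(z)-i_*Z_*(z)$. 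As noted before the statement of Theorem~\ref{theorem2}, by Theorem~\ref{theorem1} the image of $\Gamma_*$ cannot be finite dimensional in the sense of Roitman unless the induced involution acts as $+1$ on $A_1(X)$; combined with the fact that $A_1(X)$ is the anti-invariant part this would force $A_1(X)=0$, contradicting its infinite dimensionality \cite{SC}. So it suffices to show that the putative equality forces $\im(\Gamma_*)$ to be finite dimensional.

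Assume therefore that $L$ is very ample on $S$, that $g$ is the genus of a smooth member $C\in|L|$, and that $L^2-g+1=g+n$ with $n$ a positive integer. Tensoring the ideal sequence of $C$ in $S$ by $L$ and using $H^1(S,\bcO_S)=0$ together with Riemann--Roch on $C$ — exactly as in the proof of Theorem~\ref{theorem2} — gives $\dim|L|=L^2-g+1=g+n$; this step uses that the discriminant surface has irregularity zero, which for the cubic holds because $S$ is a quintic. Form the universal smooth curve $\bcC\to|L|_0$ over the locus of smooth members, its double cover $\wt\bcC\subset T\times|L|_0$, and the associated Prym fibration $\bcP(\wt\bcC/\bcC)\to|L|_0$, whose total space has dimension $(g+n)+(g-1)=2g+n-1$ by the same count as in Theorem~\ref{theorem2}; a general $\wt C$ is smooth by Bertini and connected by the Hodge index theorem. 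Using the identification of $A_1(X_t)$ with the Prym variety of the discriminant curve $S_t$, the map $T^{g+n}\to A_1(X)$, $(t_1,\dots,t_{g+n})\mapsto\sum_i\bigl(Z_*(t_i)-i_*Z_*(t_i)\bigr)$, factors through $\bcP(\wt\bcC/\bcC)$ via $(t_1,\dots,t_{g+n})\mapsto\mathrm{alb}_{\wt C}\bigl(\sum_i t_i-i(t_i)\bigr)$, with $C\in|L|$ the unique smooth member through the images of the $t_i$ in $S$.

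Now $\dim T^{g+n}=2g+2n>2g+n-1$, so the fibres $F_s$ of $T^{g+n}\to\bcP(\wt\bcC/\bcC)$ are positive dimensional and contain curves. Taking $L'$ to be the pullback to $T$ of $\bcO_S(1)$, which is ample, the divisor $\sum_i\pr_i^*L'$ is ample on $T^{g+n}$, so every such curve meets it and every $F_s$ has points supported on $C_1\times T^{g+n-1}$ for a fixed $C_1\in|L'|$. Feeding the resulting cycle into the same construction on the remaining factors — the analogous map out of $T^{g+n-k}$ into the same Prym fibration still has positive dimensional fibres while $n$ is large, which is the role played by ``$n$ large'' in the proof of Theorem~\ref{theorem2} — pushes the support onto $C_1^{k}\times T^{g+n-k}$; since the genus of members of $|L'|$ is a fixed integer $g_0$ depending only on $S$, once $k>g_0$ the standard reduction yields $\Gamma_*(T^{g+n})=\Gamma_*(T^{m_0})$ for some $m_0<g+n$, and an induction then gives $\Gamma_*(T^m)=\Gamma_*(T^{m_0})$ for all $m\geq g+n$. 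Writing any degree-zero cycle on $T$ as $z^+-z^-$ with $z^\pm$ effective of equal degree and assembling a single correspondence $\Gamma'$ on $T^{2m_0}\times X$ with $\im(\Gamma_*)=\Gamma'_*(T^{2m_0})$ exhibits $\im(\Gamma_*)$ as finite dimensional, and the contradiction recorded in the first paragraph applies.

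The transcription of the cubic argument is routine; the genuinely new input — and hence the main obstacle — lies in the two facts about the cubic that were used: (a) that the discriminant surface of a general conic bundle fourfold in $\PR^5$ has irregularity zero, so that $\dim|L|=L^2-g+1$ as above (for the cubic this is immediate since $S$ is a quintic); and (b) that $A_1(X)$ is nonzero, indeed infinite dimensional, for the fourfolds under consideration, so that the final collapse is a genuine contradiction (for the cubic this is \cite{SC}, and in general one would deduce it from unirationality together with the geometry of the conic bundle). The internal dimension bookkeeping — positivity of the fibre dimensions at each stage of the iteration and termination at some $m_0<g+n$ — is identical to the cubic case and raises no new difficulty once (a) and (b) are in place.
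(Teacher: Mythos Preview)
Your proposal is correct and follows exactly the paper's approach: the paper's own ``proof'' of this statement consists solely of the remark that the argument for Theorem~\ref{theorem2} uses nothing about the cubic beyond the conic bundle structure on $X$ and on its hyperplane sections, and you have faithfully transcribed that argument step by step. If anything you are more careful than the paper, since you explicitly isolate the two cubic-specific inputs --- irregularity zero for $S$ and infinite dimensionality of $A_1(X)$ --- that the paper silently carries over without justification in the general setting.
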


\end{document}